\newlength{\defbaselineskip}
\newcommand{\setlinespacing}[1]%
           {\setlength{\baselineskip}{#1 \defbaselineskip}}
\theoremstyle{plain}
\newtheorem{thm}{Theorem}[section]
\newtheorem{pro}[thm]{Problem }
\theoremstyle{definition}
\newtheorem{ass}{Assumption}[section]
\makeatletter\@addtoreset{equation}{section} \makeatother
\begin{document}

\title{ Non-zero Sum Stochastic Differential Games of Fully Coupled
Forward-Backward Stochastic
 Systems }%퉀\author{Maoning Tang$^1$ \and
Qingxin Meng$^1$\and Yongzheng Sun$^2$}  %Name
\date{}

\footnotetext[1]{Department of Mathematics, Huzhou University, Huzhou,  313000,China, Email: 071018034@fudan.edu.cn.}
\footnotetext[2]{School of Sciences, China University of Mining and Technology, Xuzhou, 221008, China, Email: yzsung@gmail.com}

\maketitle

\begin{abstract}In this paper,  an open-loop two-person
non-zero sum stochastic
differential game is considered for forward-backward stochastic
 systems. More
precisely, the controlled systems are described by a fully coupled
nonlinear multi- dimensional forward-backward stochastic
differential equation driven by  a multi-dimensional Brownian
motion.   one sufficient (a verification theorem) and one necessary conditions
 for the existence of  open-loop Nash equilibrium  points for the corresponding two-person
non-zero sum stochastic differential game are proved. The control
domain need to be convex and the admissible controls for both
players are allowed to appear in both the drift and diffusion of the
state equations.
\end{abstract}

\textbf{Keywords}:  N-person differential games, forward-backward stochastic
differential equation, Nash equilibrium  point.

\maketitle

\section{ Introduction }

Differential Game theory had been an active area of research and a useful tool in
many applications, particularly in biology and economic. The so called differential
games  are the ones in which the position, being controlled by the players,
 evolves continuously. On the one hand, since the study of differential games was initiated by Isaacs \cite{Isa54},
 many papers (see \cite{Berk6401,Berk6402,Berk6701,Berk6702,Berk85,ElKa72,Flem61,Flem64}) have appeared which developed the foundations for two-person zero sum  differential
 games.  For this case, there  a single performance criterion which one player tries to minimize and the
 other tries to maximize.  On  the  other hand, many authors (see
 \cite{Frie69,Case69,ChZh,Eise82,LuRu71,SaPa71,SaRaPr,Sta69,
 StHo6901,StHo6902,Var70}discussed N-person non-zero sum differential
 games. For this case, there may be more than two players and each player tries to
 minimize his individual performance criterion, and the sum of all player's criteria
 is not zero or is it constant.

  All the above mentioned paper are restricted deterministic system. On
  the differential games of  stochastic systems, we can refer to\cite{BaLi,Frie72,Uch78}.
  In 2008, Tang and Li \cite{TaLi} established  the minimax  principle for  N-person differential games governed by
  forward stochastic systems with the control appearing in the diffusion term.
  In 2010, wang and Yu \cite{WaYu} studied the Non-zero sum differential
  games of backward stochastic systems, and they established
  a necessary condition and a sufficient condition in the form of
  stochastic maximum principle for open-loop
  Nash equilibrium.

  Forward-Backward stochastic systems   are not only used in mathematical economics (see Antonelli
\cite{Anto}, Duffie and Epstein \cite{DuEp}, for example), but also used in
mathematical finance(see El Karoui, Peng and Quenez \cite{ElPe}). It now
becomes more clear that certain important problems in mathematical
economics and mathematical finance, especially in the optimization
problem, can be formulated to be Forward-backward stochastic
system.  So the optimal control problem for Forward-backward stochastic
system and the corresponding stochastic maximum principle are extensively
studied in this literature. We refer to \cite{Xu95,Wu9, ShWz2006} and references therein. They
established the necessary maximum principle in the case the control
domain is convex or the forward diffusion coefficients can not
contain a control variable.  In 2010, Yong\cite{Yong10} proved necessary conditions for
the optimal control of forward-backward stochastic systems where the control
domain is not assumed to be convex and the control appears
in the diffusion coefficient of the
forward equation.

In this paper we will discuss non-zero sum stochastic
  differential games for forward-backward  stochastic systems.
  To our best knowledge, very little work has been published
on this subject.  In section 2, we state  the
problem and our main assumptions. In section 3, we state and prove
our main results: a sufficient condition for the existence
of open-loop Nash equilibrium point which can check
whether the candidate equilibrium points are optimal or not. Section 4
is devoted to present a necessary condition for
the existence of open-loop
Nash equilibrium point  by  the stochastic maximum principle
for the optimal control of the optimal
control problem of forward-backward stochastic systems established in \cite{Wu9}.

Moreover, we refer to \cite{Peng99,MaYo99} on the existence and
uniqueness of solutions to the fully coupled forward-backward
stochastic differential equations.

 \section{Problem formulation and main assumptions}

Let $(\Omega,{\cal F}, \{{\cal F}_t\}_{t\geq 0}, P)$ be a
complete probability space, on which a $d$-dimensional
standard Brownian motion $B(\cdot)$ is defined with
$\{{\cal F}_t\}_{t\geq 0}$  being its natural filtration, augmented
by all $P$-null sets in ${\cal F}.$  Let $T>0$
be a fixed time horizon.
Let E be a Euclidean space. The inner product in E is denoted by $\langle \cdot, \cdot\rangle $,  and the norm in E
is denoted  by $|\cdot|.$
 We further introduce some other spaces that will be used in the paper.
 Denote by $L^2(\Omega, {\cal F}_T, P; E)$  the the set of all
 $E$-valued ${\cal F}_T$-measurable random variable $\eta$ such that
 $E|\eta|^2<\infty.$
  Denote by $M^2(0, T; E)$ the
set of all $E$-valued  $\cal F_t$-adapted stochastic processes
$\{\varphi(t): t\in [0, T] \}$ which satisfy $E\int_0^T|\varphi(t)|^2dt<\infty.$
 Denote by $\cal S^2(0, T; E)$ the
set of all $E$-valued   $\cal F_t$-adapted continuous stochastic processes
$\{\varphi(t): t\in [0, T] \}$ which satisfy $E\sup_{0\leq t\leq
 T}|\varphi(t)|^2dt<\infty.$

In this paper, we consider
    the system which is given by a
   controlled fully coupled nonlinear forward-backward stochastic differential equations (abbr. FBSDEs)
   of the form
   \begin{equation}\label{eq:1.1}
\displaystyle\left\{\begin{array}{lll}
dx(t)&=&b(t,x(t),y(t),z(t),u_1(t),u_2(t))dt\\&~~&+\sigma(t,x(t),y(t),z(t),u_1(t),u_2(t))dB(t),\\\displaystyle
dy(t)&=&-f(t,x(t),y(t),z(t),u_1(t),u_2(t))dt\\&~~&+z(t)dB(t),\\\displaystyle
x(0)&=&a,\\\displaystyle y(T)&=&\xi.

\end{array}
\right.
\end {equation}
Here $
\displaystyle b: [0,T]\times R^n\times R^m\times R^{m\times
d}\times { U}_1\times{
U}_2\rightarrow R^n,
\displaystyle \sigma: [0,T]\times R^n\times R^m\times R^{m\times
d}\times { U}_1\times{
U}_2\rightarrow R^{n\times d},
\displaystyle f:[0,T]\times R^n\times R^m\times R^{m\times d}\times
{ U}_1\times{
U}_2\rightarrow R^m
$
are given mapping,  $a$
and $T>0$ are given constants, and$\xi\in L^2(\Omega, {\cal F}_T, P; R^m)$. The processes $u_1(\cdot)$ and
$u_2(\cdot)$ in the system \eqref{eq:1.1} are the  open-loop
control processes which present the controls of the two players,
required to have values in two given nonempty convex sets ${
U}_1\subset R^{k_1}$ and ${ U}_2\subset R^{k_2}$ respectively.
The  admissible control process $(u_1(\cdot), u_2(\cdot))$ is defined as a ${\cal
F}_t$-adapted process with values in $U_1\times U_2$ such that
$E\displaystyle\int_0^T(|u_1(t)|^2+|u_2(t)|^2)dt<+\infty.$  The set of all
admissible control processes  is denoted by ${\cal A}_1\times{\cal A}_2.$

For each one of the two player,  there is a cost functional
\begin{equation}
\begin{array}{ll}
&J_i(u_1(\cdot),
u_2(\cdot))\\=&E\bigg[\displaystyle\int_0^Tl_i(t,x(t),y(t),z(t),u_1(t),
u_2(t))dt\\&+\phi_i(x(T))+h_i(y(0))\bigg],
\end{array}
\end{equation}
 where
$ l_i: [0,T]\times R^n\times R^m\times R^{m\times
d}\times {\cal
U}_1\times{\cal U}_2\rightarrow R,
\displaystyle\phi_i: R^n \rightarrow R,
\displaystyle h_i: R^m \rightarrow R
$
 are given mapping $(i=1,2)$.

 Now we make  the main assumptions throughout the paper.

 \begin{ass}\label{ass:2.1}
 $f,g, \sigma$   are  continuously  differentiable  with  respect  to  $(x,y,z,u_1, u_2)$.
The  derivatives  of  $f, g, \sigma$ are bounded.
For any admissible control $(u_1(\cdot),u_2(\cdot)),$
the forward-backward stochastic system satisfies  the assumptions
(H2.1) and (H2.2) in Wu\cite{Wu9}.
\end{ass}
\begin{ass}\label{ass:2.2}
$l_i,\phi_i$ and $h_i$  are  continuously  differentiable  with  respect  to  $(x,y,z,u_1, u_2), x$
and $y, (i=1,2).$
And $l_i$ is bounded by $C(1+|x|^2+|y|^2+|z|^2+|u_1|^2+
|u_2|^2).$ And the  derivatives of  $l_i$ are bounded by $C(1+|x|+|y|+|z|+|u_1|+|u_2|).$
And $\phi_i$ and $h_i$ are bounded by $C(1+|x|^2)$ and $C(1+|y|^2)$ respectively.
And the  derivatives  of $\phi_i$ and $h_i$ with respect to $x$ and $y$
are bounded by $C(1+|x|)$ and $C(1+|y|)$ respectively. $(i=1,2)$.
\end{ass}
 Under Assumption \ref{ass:2.1},  from Theorem 2.1 in Wu \cite{Wu9},  we see that
 for any given admissible control  $(u_1(\cdot), u_2(\cdot)$, the system \eqref{eq:1.1}
 admits a unique solution $$(x(\cdot), y(\cdot), z(\cdot))\in S_{\cal
F}^2(0,T;R^n)\times\in S_{\cal
F}^2(0,T;R^m)\times \in M_{\cal
F}^2(0,T; R^{m\times d}).$$  Then we call $(x(\cdot), y(\cdot), z(\cdot))$ the
state process corresponding to the control process $(u_1(\cdot), u_2(\cdot)$ and
 $((u_1(\cdot), u_2(\cdot); y(\cdot), q(\cdot), z(\cdot))$ the
admissible pair. Furthermore, from Assumption \ref{ass:2.2},
it is easy to check that$ |J_i(u_1(\cdot), u_2(\cdot))|<\infty.$$(i=1,2).$

Then we can pose the following two-person non-zero
sum stochastic differential game problem

\begin{pro} \label{pro:2.1}
  Find an open-loop admissible control $(\bar{u}_1(\cdot), \bar u_2(\cdot))
  \in \cal A_1\times \cal A_2$  such that
 \begin{equation} \label{eq:b7}
J_1(\bar{u}_1(\cdot), \bar u_2(\cdot))=\displaystyle\inf_{u_1(\cdot)\in {\cal
A}_1}J_1({u}_1(\cdot), \bar u_2(\cdot))
\end{equation}
and
\begin{equation} \label{eq:b7}
J_2(\bar{u}_1(\cdot), \bar u_2(\cdot))=\displaystyle\inf_{u_2(\cdot)\in {\cal
A}_2}J_2(\bar{u}_1(\cdot),  u_2(\cdot)).
\end{equation}

\end{pro}
 Any    $(\bar{u}_1(\cdot), \bar u_2(\cdot))
  \in \cal A_1\times \cal A_2$   satisfying the above is called
  a open-loop Nash equilibrium point of Problem \ref{pro:2.1}. Such an admissible control
  allows two players to
  play individual
  optimal control strategies simultaneously.

\section{A Verification Theorem}

In this section we state and prove  a verification theorem
for the Nash equilibrium points of  Problem \ref{pro:2.1}.

 For any given admissible pair $(u_1(\cdot), u_2(\cdot);  x(\cdot), y(\cdot), z(\cdot)),$
  We can  introduce the following adjoint forward-backward
   stochastic differential  equations of the system \eqref{eq:1.1}
  \begin {equation}
  \left\{\begin{array}{ll}\label{eq:2.1}
      \displaystyle dk^i(t)=&-\bigg[b_y^*(t,x(t),y(t),z(t),u_1(t),u_2(t))p^i(t)\\
 &+\sigma_y^*(t,x(t),y(t),z(t),u_1(t),u_2(t))q^i(t)
 \\&-f_y^*(t,x(t),y(t),z(t),u_1(t),u_2(t))k^i(t)\\
 &
 \displaystyle+l_{iy}(t,x(t),y(t),z(t),u_1(t),u_2(t))\bigg]dt\\
 ~~&-\bigg[b_z^*(t,x(t),y(t),z(t),u_1(t),u_2(t))p^i(t)\\
 &
\displaystyle
+\sigma_z^*(t,x(t),y(t),z(t),u_1(t),u_2(t))q^i(t)
\\~~&-f_z^*(t,x(t),y(t),z(t),u_1(t),u_2(t))k^i(t)\\
& +l_{iz}(t,x(t),y(t),z(t),u_1(t),u_2(t))\bigg]dB(t)\\\displaystyle
dp^i(t)=&-\bigg[b_x^*(t,x(t),y(t),z(t),u_1(t),u_2(t))p^i(t)\\
&+\sigma_x^*(t,x(t),y(t),z(t),u_1(t),u_2(t))q^i(t)\\~~&
-f_x(t,x(t),y(t),z(t),u_1(t),u_2(t))k^i(t)\\
&\displaystyle
+l_{ix}(t,x(t),y(t),z(t),u_1(t),u_2(t))\bigg]dt\\
&+q^i(t)dB(t)
\\\displaystyle k^i(0)=&-h_{iy}(y_0),
~~~p^i(T)=\phi_{ix}(x(T)),\\&~~0\leq t\leq T, (i=1.2).
\end{array}
\right.
\end {equation}
Under Assumptions\ref{ass:2.1}-\ref{ass:2.2}, according to Theorem 2.2 in \cite{Wu9},
 the above adjoint equation has a unique solution
$(k^i(\cdot), p^i(\cdot), q^i(\cdot))\in {\cal S}_{\cal F}(0,T; R^m)\times \in {\cal S}_{\cal F}^2(0,T; R^n)
\times \in M_{\cal F}(0,T; R^{n\times d}), (i=1.2).$

  We define the Hamiltonian functions $H_i:[0,T]\times R^n\times
R^m\times R^{m\times d}\times {\cal U}_1\times {\cal U}_2\times
R^n\times R^{n\times d}\times R^m\rightarrow R$ by
\begin {equation}\label{eq:2.2}
\begin{array}{ll}
&\displaystyle H_i(t,x,y,z,u_1,u_2,p,q,k)=\langle k,
-f(t,x,y,z,u_1,u_2\rangle\\
&+\langle p, b(t,x,y,z,u_1,u_2)\rangle +l_i(t,x,y,z,u_1,u_2)\\\displaystyle
&+\langle q,\sigma(t,x,y,z,u_1,u_2)\rangle,  (i=1,2).
\end{array}
\end {equation} Then we can rewrite the equations \eqref{eq:2.1}  in Hamiltonian
system's form:
\begin {equation}
  \left\{\begin{array}{ll}
  \displaystyle dk^i(t)&=-H_{iy}(t,x(t),y(t),z(t),u_1(t),u_2(t),p^i(t),q^i(t),k^i(t))dt\\
  &-H_{iz}(t,x(t),y(t),z(t),u_1(t),u_2(t),p^i(t),q^i(t),k^i(t))dB(t)\\\displaystyle
dp^i(t)&=-H_{ix}(t,x(t),y(t),z(t),u_1(t),u_2(t),p^i(t),q^i(t),k^i(t))dt\\
&+q^i(t)dB(t)\\
 k^i(0)=&-h_{iy}(y_0),
~~~p^i(T)=\phi_{ix}(x(T)), (i=1,2).
  \end{array}
  \right.
  \end {equation}

We are now coming to a verification theorem for an Nash equilibrium point of
 Problem \ref{pro:2.1}.
\begin{thm}

Under Assumptions \ref{ass:2.1}-\ref{ass:2.2}, let $(\bar{u}_1(\cdot), \bar{u}_2(\cdot);
 \bar{x}(\cdot), \bar{y}(\cdot), \bar{z}(\cdot))$
  be an admissible pair.
 Let $({\bar p^i}(\cdot), \bar q^i(\cdot), \bar k^i(\cdot) )$$(i=1,2)$ be  the unique
 solution of the corresponding adjoint equation \eqref{eq:2.1}.  Suppose that
 for almost all  $(t,\omega)\in [0,T]\times \Omega$ ,
 $(x,y,z,u_1)\mapsto H_1(t,x,y,z,{u}_1,\bar{u}_2(t),\\ \bar{p}^1(t),\bar{q}^1(t),\bar{k}^1(t))$
 is convex with respect to  $(x,y,z,u_1)$,
 $(x,y,z,u_2)\mapsto H_2(t,x,y,z,\bar{u}_1(t),{u}_2,\bar{p}^2(t),\\\bar{q}^2(t),\bar{k}^2(t))$
 is convex with respect to  $(x,y,z,u_2)$, $x\mapsto h_i(x)$ is convex with respect with to
   $x$, and $y\mapsto \phi_i(y)$ is convex with respect to $y$ (i=1,2), and the
 following optimality condition holds
 \begin {equation}\label{eq:2.4}
\begin{array}{ll}
&\displaystyle\max_{u_1\in{\cal U}_1}
H_1(t,\bar{x}(t),\bar{y}(t),\bar{z}(t),u_1,\bar{u}_2(t),\bar{p}^1(t),\bar{q}^1(t),\bar{k}^1(t))
\\&
~~~~=H_1(t,\bar{x}(t),\bar{y}(t),\bar{z}(t),\bar{u}_1(t),\bar{u}_2(t),\bar{p}^1(t),\bar{q}^1(t),\bar{k}^1(t)),
\end{array}
\end {equation}
and
\begin {equation}\label{eq:2.5}
\begin{array}{ll}
&\displaystyle\max_{u_2\in {\cal
U}_2}H_2(t,\bar{x}(t),\bar{y}(t),\bar{z}(t),\bar{u}_1(t),u_2,\bar{p}^2(t),\bar{q}^2(t),\bar{k}^2(t))\\&
~~~~=H_2(t,\bar{x}(t),\bar{y}(t),\bar{z}(t),\bar{u}_1(t),\bar{u}_2(t),\bar{p}^2(t),\bar{q}^2(t),\bar{k}^2(t)).
\end{array}
\end {equation}
 Then $(\bar{u}_1(\cdot), \bar{u}_2(\cdot))$ is Nash equilibrium point of
 Problem \ref{pro:2.1}

\end{thm}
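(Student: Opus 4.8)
The plan is to prove the two Nash‑equilibrium inequalities separately and symmetrically; I describe the argument for player~1, the one for player~2 being identical with the roles of $u_1$ and $u_2$ interchanged. Fix an arbitrary admissible control $u_1(\cdot)\in{\cal A}_1$ and keep $\bar u_2(\cdot)$ fixed. Let $(x(\cdot),y(\cdot),z(\cdot))$ be the state trajectory associated with $(u_1(\cdot),\bar u_2(\cdot))$ and $(\bar x(\cdot),\bar y(\cdot),\bar z(\cdot))$ the one associated with $(\bar u_1(\cdot),\bar u_2(\cdot))$. I want to show $J_1(u_1(\cdot),\bar u_2(\cdot))-J_1(\bar u_1(\cdot),\bar u_2(\cdot))\ge 0$. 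Writing out the difference of the cost functionals, I would split it into the running‑cost term $E\int_0^T[l_1(t,\cdot)-l_1(t,\bar\cdot)]\,dt$, the terminal term $E[\phi_1(x(T))-\phi_1(\bar x(T))]$, and the initial term $E[h_1(y(0))-h_1(\bar y(0))]$.

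The key computational step is to apply Itô's formula to the products $\langle \bar p^1(t),\,x(t)-\bar x(t)\rangle$ and $\langle \bar k^1(t),\,y(t)-\bar y(t)\rangle$ on $[0,T]$, using the adjoint equations \eqref{eq:2.1} and the forward–backward dynamics \eqref{eq:1.1} for both trajectories. Because the adjoint system is exactly the Hamiltonian system associated with $H_1$, after taking expectations the stochastic integrals vanish and the boundary terms produce precisely $E\langle\phi_{1x}(\bar x(T)),x(T)-\bar x(T)\rangle$ at time $T$, $-E\langle h_{1y}(\bar y(0)),y(0)-\bar y(0)\rangle$ at time $0$, and a collection of drift terms. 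Collecting everything, the cost difference can be rewritten as
\begin{equation}\label{eq:vt-decomp}
\begin{array}{ll}
&J_1(u_1(\cdot),\bar u_2(\cdot))-J_1(\bar u_1(\cdot),\bar u_2(\cdot))\\[1mm]
=&E\displaystyle\int_0^T\Big[H_1(t,x,y,z,u_1,\bar u_2,\bar p^1,\bar q^1,\bar k^1)-H_1(t,\bar x,\bar y,\bar z,\bar u_1,\bar u_2,\bar p^1,\bar q^1,\bar k^1)\\[1mm]
&\qquad -\langle H_{1x}(t,\bar\cdot),x-\bar x\rangle-\langle H_{1y}(t,\bar\cdot),y-\bar y\rangle-\langle H_{1z}(t,\bar\cdot),z-\bar z\rangle\Big]dt\\[1mm]
&+E\big[\phi_1(x(T))-\phi_1(\bar x(T))-\langle\phi_{1x}(\bar x(T)),x(T)-\bar x(T)\rangle\big]\\[1mm]
&+E\big[h_1(y(0))-h_1(\bar y(0))-\langle h_{1y}(\bar y(0)),y(0)-\bar y(0)\rangle\big],
\end{array}
\end{equation}
where $H_{1x}(t,\bar\cdot)$ etc.\ denote the gradients evaluated along the optimal pair. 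Here the bookkeeping of signs coming from the backward component $y$ (note the minus sign in $dy$ and in $f$) and from the cross terms $b_z,\sigma_z,f_z$ in the $k^1$‑equation is the delicate part, and I would carry it out carefully; everything else is routine.

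To finish, I invoke the hypotheses. Convexity of $\phi_1$ and of $h_1$ makes the last two lines of \eqref{eq:vt-decomp} nonnegative. For the integrand, convexity of $(x,y,z,u_1)\mapsto H_1(t,x,y,z,u_1,\bar u_2(t),\bar p^1(t),\bar q^1(t),\bar k^1(t))$ gives, by the gradient inequality for convex functions,
\begin{equation}\label{eq:vt-conv}
H_1(t,x,y,z,u_1,\bar u_2,\bar p^1,\bar q^1,\bar k^1)-H_1(t,\bar x,\bar y,\bar z,\bar u_1,\bar u_2,\bar p^1,\bar q^1,\bar k^1)
\ge \langle H_{1x},x-\bar x\rangle+\langle H_{1y},y-\bar y\rangle+\langle H_{1z},z-\bar z\rangle+\langle H_{1u_1},u_1-\bar u_1\rangle,
\end{equation}
all gradients evaluated along the optimal pair. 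Thus the integrand in \eqref{eq:vt-decomp} is bounded below by $\langle H_{1u_1}(t,\bar\cdot),u_1(t)-\bar u_1(t)\rangle$. Finally, the optimality condition \eqref{eq:2.4} says $\bar u_1(t)$ maximizes $u_1\mapsto H_1(t,\bar x,\bar y,\bar z,u_1,\bar u_2,\bar p^1,\bar q^1,\bar k^1)$ over the convex set ${\cal U}_1$, so the first‑order condition gives $\langle H_{1u_1}(t,\bar\cdot),u_1-\bar u_1(t)\rangle\le 0$ for every $u_1\in{\cal U}_1$; wait—I want the integrand nonnegative, so I use instead that this maximality yields $\langle H_{1u_1}(t,\bar\cdot),\,u_1(t)-\bar u_1(t)\rangle\le 0$, which combined with \eqref{eq:vt-conv} shows the integrand is $\ge$ a quantity whose expectation I must still control; the clean way is to note that \eqref{eq:vt-conv} with the maximum condition directly gives $H_1(t,x,y,z,u_1,\bar u_2,\cdots)-H_1(t,\bar x,\cdots)\ge\langle H_{1x},x-\bar x\rangle+\langle H_{1y},y-\bar y\rangle+\langle H_{1z},z-\bar z\rangle$, so the bracket in \eqref{eq:vt-decomp} is nonnegative. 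Hence $J_1(u_1(\cdot),\bar u_2(\cdot))\ge J_1(\bar u_1(\cdot),\bar u_2(\cdot))$. Repeating verbatim with indices $1\leftrightarrow2$ and using \eqref{eq:2.5} gives $J_2(\bar u_1(\cdot),u_2(\cdot))\ge J_2(\bar u_1(\cdot),\bar u_2(\cdot))$, so $(\bar u_1(\cdot),\bar u_2(\cdot))$ is a Nash equilibrium point. The main obstacle, as noted, is the sign bookkeeping in the Itô expansion of \eqref{eq:vt-decomp} caused by the fully coupled forward–backward structure; the integrability needed to drop the martingale parts is guaranteed by the a priori estimates in Wu~\cite{Wu9} under Assumptions~\ref{ass:2.1}–\ref{ass:2.2}.
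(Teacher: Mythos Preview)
Your approach is essentially identical to the paper's: you split the cost difference into running, terminal and initial parts, apply It\^o's formula to $\langle \bar p^1(t),x(t)-\bar x(t)\rangle+\langle \bar k^1(t),y(t)-\bar y(t)\rangle$, rewrite everything through the Hamiltonian, and invoke convexity of $\phi_1$, $h_1$ and $H_1$ together with the optimality condition. The paper organizes the bookkeeping as $I_1+I_2=(J_3-J_2)+(-J_1+J_2)$, but your decomposition \eqref{eq:vt-decomp} is the same identity repackaged.

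Your hesitation about the sign of $\langle H_{1u_1}(t,\bar\cdot),u_1-\bar u_1(t)\rangle$ is well founded, and your attempted ``clean way'' does not actually close the gap: from convexity you only get a lower bound by $\langle H_{1u_1},u_1-\bar u_1\rangle$, and if $\bar u_1$ is a \emph{maximizer} of a convex function that term can be negative. What resolves this is that the paper itself uses, at the corresponding step (its inequality \eqref{eq:2.15}), the first-order condition $\langle H_{1u_1}(t,\bar\cdot),u_1-\bar u_1(t)\rangle\geq 0$, which is the variational inequality characterizing a \emph{minimizer} of $H_1$ over the convex set ${\cal U}_1$. In other words, the ``$\max$'' in \eqref{eq:2.4}--\eqref{eq:2.5} is a typo for ``$\min$'' (consistent with the cost being minimized, the $+l_i$ sign in $H_i$, and the convexity hypotheses). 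Read that way, your argument and the paper's coincide and are correct.
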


\begin{proof} (i) we  consider an
stochastic optimal control problem. The system  is the following
controlled
forward-backward stochastic differential equation
\begin{equation}\label{eq:2.5}
\displaystyle\left\{\begin{array}{lll}
dx(t)&=&b(t,x(t),y(t),z(t),u_1(t),\bar{u}_2(t))dt\\
&&+\sigma(t,x(t),y(t),z(t),u_1(t),\bar{u}_2(t))dB(t),\\\displaystyle
dy(t)&=&-f(t,x(t),y(t),z(t),u_1(t),\bar{u}_2(t))dt\\
&&+z(t)dB(t),\\\displaystyle x(0)&=&a\\\displaystyle y(T)&=&\xi,
\end{array} \right.
\end {equation}

where $u_1(\cdot)$ is  any given admissible control in $\cal A_1.$
The cost function is defined as
\begin{equation}\label{eq:3.7}
\begin{array}{ll}
&J_1(u_1(\cdot),\bar{u}_2(\cdot))\\
=&E\bigg[\displaystyle\int_0^Tl_1(t,x(t),y(t),z(t),u_1(t),
\bar{u}_2(t))dt\\
&+\phi_1(x(T))+h_1(y_0)\bigg], \end{array}
\end{equation}
where $(x(\cdot), y(\cdot),z(\cdot))$ is
the solution  to the forward-backward stochastic system \eqref{eq:2.5} corresponding
to the control $u_1(\cdot)\in \cal A_1.$

 The  optimal control problem is minimize
$J(u_1(\cdot), \bar{u}_2(\cdot))$ over $u_1(\cdot) \in {\cal A }_1$.
Now will show  the admissible control $\bar{u}_1(\cdot)$ is  an optimal
control of the problem, i.e,
 \begin{equation} J_1(\bar{u}_1(\cdot),
\bar{u}_2(\cdot))=\displaystyle\min_{u_1(\cdot)\in {\cal
A}_1}J_1(u_1(\cdot), \bar{u}_2(\cdot)).
\end{equation}

In fact, Let $u_1(\cdot)$ be  any admissible control in ${\cal A}_1,$
$(x(\cdot),y(\cdot),z(\cdot))$ be the corresponding state process of the
system \eqref{eq:2.5}. It is easy to check that for the control  $\bar u_1(\cdot)$,
the corresponding state process of the system \eqref{eq:2.5} is
indeed
$(\bar{x}(\cdot), \bar y(\cdot), \bar z(\cdot)).$

From \eqref{eq:3.7}, we have
\begin{equation}\label{eq:2.9}
\begin{array}{ll}
 &J_1(u_1(\cdot),\bar u_2(\cdot))-J_1(\bar{u}_1(\cdot), \bar u_2(\cdot))\\&~~~~~~=E\displaystyle\int_0^T\bigg[l_1(t,x(t),y(t),z(t),u_1(t), \bar u_2(t))\\&~~~~~~~~~-l_1(t,\bar{x}(t),\bar{y}(t),\bar{z}(t),\bar{u}_1(t), \bar u_2(t))\bigg]dt
\\&~~~~~~~~~+E\bigg[\phi_1(x(T))-\phi_1(\bar{x}(T))\displaystyle\bigg]
\\&~~~~~~~~~+E\bigg[h_1(y(0))-h_1(\bar{y}(0))\bigg]\\&~~~~~~=I_1+I_2,
\end{array}
\end{equation}
where \begin{eqnarray}
  \begin{split}
    I_1&=E\displaystyle\int_0^T\bigg[l_1(t,x(t),y(t),z(t),u_1(t), \bar u_2(t))\\&~~~~-l_1(t,\bar{x}(t),\bar{y}(t),\bar{z}(t),\bar{u}_1(t), \bar u_2(t))\bigg]dt,
  \end{split}
\end{eqnarray}

\begin{equation}
\begin{array}{ll}
\displaystyle I_2=E\bigg[\phi_1(x(T))-\phi_1(\bar{x}(t))\displaystyle\bigg]
+E\bigg[h_1(y(0))-h_1(\bar{y}(0))\bigg].
\end{array}
\end{equation}

Using Convexity of $\phi_1$ and $h_1$,  and It\^{o} formula to
$\langle \bar{p}^1(t),x(t)-\bar{x}(t)\rangle +\langle \bar{k}^1(t),y(t)-\bar{y}(t)\rangle,$
 we get

 %where we have used the fact that$$
%\phi_x(\bar{x}_T)=\bar{p}_T, h_x(\bar{y}_0)=-\bar{k}_0,
%y_T-\bar{y}_T=\xi-\xi=0, x_0-\bar{x}_0=a-a=0.$$
\begin{equation}\label{eq:2.12}
\begin{array}{ll}
I_2&=E\big[\phi_1(x(T))-\phi_1(\bar{x}(T))\big]+E\big[h_1(y(0)-h_1(\bar{y}(0))\big]\\
&\geq
E\langle \phi_{1x}(\bar{x}(T)),x(T)-\bar{x}(T)\rangle +E\langle h_{1y}(\bar{y}_0),y_0-\bar{y}_0\rangle
\\&=E\langle \bar p^1(T)),x(T)-\bar{x}(T)\rangle +E\langle  \bar k^1(0),y_0-\bar{y}_0\rangle
\\
&=-E\displaystyle\int_0^T\langle H_{1x}(t,\bar{x}(t),\bar{y}(t),\bar{z}(t),\bar u_1(t),\bar u_2(t),\bar{p}^1(t),
\bar{q}^1(t),
\bar{k}^1(t)),x(t)-\bar{x}(t)\rangle dt\\
&~~-E\displaystyle\int_0^T\langle H_{1y}(t,\bar{x}(t),\bar{y}(t),\bar{z}(t),\bar u_1(t),\bar u_2(t),\bar{p}^1(t),
\bar{q}^1(t),
\bar{k}^1(t)),y(t)-\bar{y}(t)\rangle dt\\
&~~-E\displaystyle\int_0^T\langle H_{1z}(t,\bar{x}(t),\bar{y}(t),\bar{z}(t),\bar u_1(t),\bar u_2(t),\bar{p}^1(t),
\bar{q}^1(t), \bar{k}^1(t)),z(t)-\bar{z}(t)\rangle dt
\\
&~~+E\displaystyle\int_0^T\langle\bar{p}^1(t),b(t,x(t),y(t),z(t),u_1(t),\bar u_2(t))-b(t,\bar{x}(t),\bar{y}(t),\bar{z}(t),\bar u_1(t),\bar u_2(t))
\rangle dt\\
&~~+E\displaystyle\int_0^T\langle\bar{q}^1(t),\sigma(t,x(t),y(t),z(t),u_1(t), \bar u_2(t))
-\sigma(t,\bar{x}(t),\bar{y}(t),\bar{z}(t),\bar u_1(t),\bar u_2(t))\rangle dt
\\&~~+E\displaystyle\int_0^T\langle \bar{k}^1(t),-(f(t,x(t),y(t),z(t),u_1(t), \bar u_2(t))-f(t,\bar{x}(t),\bar{y}(t),\bar{z}(t),\bar{u}_1(t), \bar{u}_2(t)))\rangle dt\\
&=-J_1+J_2,
\end{array}
\end{equation}
where
$$
\begin{array}{ll}
J_1&=E\displaystyle\int_0^T\langle H_{1x}(t,\bar{x}(t),\bar{y}(t),\bar{z}(t),\bar u_1(t),\bar u_2(t),\bar{p}^1(t),
\bar{q}^1(t),
\bar{k}^1(t)),x(t)-\bar{x}(t)\rangle dt\\
&~~+E\displaystyle\int_0^T\langle H_{1y}(t,\bar{x}(t),\bar{y}(t),\bar{z}(t),\bar u_1(t),\bar u_2(t),\bar{p}^1(t),
\bar{q}^1(t),
\bar{k}^1(t)),y(t)-\bar{y}(t)\rangle dt\\
&~~+E\displaystyle\int_0^T\langle H_{1z}(t,\bar{x}(t),\bar{y}(t),\bar{z}(t),\bar u_1(t),\bar u_2(t),\bar{p}^1(t),
\bar{q}^1(t), \bar{k}^1(t)),z(t)-\bar{z}(t)\rangle dt,\\
J_2=&E\displaystyle\int_0^T\langle\bar{p}^1(t),b(t,x(t),y(t),z(t),u_1(t),\bar u_2(t))-b(t,\bar{x}(t),\bar{y}(t),\bar{z}(t),\bar u_1(t),\bar u_2(t))
\rangle dt\\
&~~+E\displaystyle\int_0^T\langle\bar{q}^1(t),\sigma(t,x(t),y(t),z(t),u_1(t), \bar u_2(t))
-\sigma(t,\bar{x}(t),\bar{y}(t),\bar{z}(t),\bar u_1(t),\bar u_2(t))\rangle dt
\\&~~+E\displaystyle\int_0^T\langle \bar{k}^1(t),-(f(t,x(t),y(t),z(t),u_1(t), \bar u_2(t))-f(t,\bar{x}(t),\bar{y}(t),\bar{z}(t),\bar{u}_1(t), \bar{u}_2(t)))\rangle dt
\end{array}
$$
and we have used  the fact that$$
y(T)-\bar{y}(T)=\xi-\xi=0, x(0)-\bar{x}(0)=a-a=0.$$

On the other hand, in view of the definition of Hamilton function $H_1$ (see
\eqref{eq:2.2}), the integration $I_1$ can be rewritten as
\begin{equation}
\begin{array}{ll}\label{eq:2.13}
I_1=&E\displaystyle\int_0^T\bigg[l_1(t,x(t),y(t),z(t),u_1(t), \bar u_2(t))\\&~~~~-l_1(t,\bar{x}(t),\bar{y}(t),\bar{z}(t),\bar{u}_1(t), \bar u_2(t))\bigg]dt\\
~~~=&E\displaystyle\int_0^T\bigg[H_1(t,x(t),y(t),z(t),u_1(t),\bar u_2(t), \bar{p}^1(t),
\bar{q}^1(t), \bar{k}^1(t))
\\&-H_1(t,\bar{x}(t),\bar{y}(t),\bar{z}(t),\bar{u}_1(t),
\bar{u}_2(t),\bar{p}^1(t),
\bar{q}^1(t), \bar{k}^1(t))\bigg]dt
\\
&-E\displaystyle\int_0^T\langle\bar{p}^1(t),b(t,x(t),y(t),z(t),u_1(t),\bar u_2(t))-b(t,\bar{x}(t),\bar{y}(t),\bar{z}(t),\bar u_1(t),\bar u_2(t))
\rangle dt\\
&-E\displaystyle\int_0^T\langle\bar{q}^1(t),\sigma(t,x(t),y(t),z(t),u_1(t), \bar u_2(t))
-\sigma(t,\bar{x}(t),\bar{y}(t),\bar{z}(t),\bar u_1(t),\bar u_2(t))\rangle dt
\\&-E\displaystyle\int_0^T\langle \bar{k}^1(t),-(f(t,x(t),y(t),z(t),u_1(t), \bar u_2(t))-f(t,\bar{x}(t),\bar{y}(t),\bar{z}(t),\bar{u}_1(t), \bar{u}_2(t)))\rangle dt\\
~~~=&J_3-J_2,
\end{array}
\end{equation}
where
\begin{eqnarray}\label{eq:2.14}
  \begin{split}
    J_3=&E\displaystyle\int_0^T\bigg[H_1(t,x(t),y(t),z(t),u_1(t),\bar u_2(t), \bar{p}^1(t),
\bar{q}^1(t), \bar{k}^1(t))
\\&-H_1(t,\bar{x}(t),\bar{y}(t),\bar{z}(t),\bar{u}_1(t),
\bar{u}_2(t),\bar{p}^1(t),
\bar{q}^1(t), \bar{k}^1(t))\bigg]dt
  \end{split}
\end{eqnarray}
From the optimality condition \eqref{eq:2.4}, we have
\begin{equation}\label{eq:2.15}
\begin{array}{ll}
&
\bigg\langle H_{1u_1}(t,\bar{x}(t),\bar{y}(t),\bar{z}(t),\bar{u}_1(t),
\bar u_2(t), \bar{p}^1(t),
\bar{q}^1(t), \bar{k}^1(t)), u_1(t)-\bar{u}_1(t)\bigg\rangle \geq 0, a.s.a.e..
\end{array}
\end{equation}

Using convexity of $H_1(t,x,y,z,u_1,\bar{u}_2(t),\bar{p}^1(t),\bar{q}^1(t),\bar{k}^1(t))$ with
respect to $(x,y,z,u_1)$,  and noting \eqref{eq:2.14} and \eqref{eq:2.15},  we have
\begin{equation}\label{eq:2.16}
\begin{array}{ll}
J_3&\geq
E\displaystyle\int_0^T\langle H_{1x}(t,\bar{x}(t),\bar{y}(t),\bar{z}(t),\bar u_1(t),\bar u_2(t),\bar{p}^1(t),
\bar{q}^1(t),
\bar{k}^1(t)),x(t)-\bar{x}(t)\rangle dt\\
&~~+E\displaystyle\int_0^T\langle H_{1y}(t,\bar{x}(t),\bar{y}(t),\bar{z}(t),\bar u_1(t),\bar u_2(t),\bar{p}^1(t),
\bar{q}^1(t),
\bar{k}^1(t)),y(t)-\bar{y}(t)\rangle dt\\
&~~+E\displaystyle\int_0^T\langle H_{1z}(t,\bar{x}(t),\bar{y}(t),\bar{z}(t),\bar u_1(t),\bar u_2(t),\bar{p}^1(t),
\bar{q}^1(t), \bar{k}^1(t)),z(t)-\bar{z}(t)\rangle dt\\
&=J_1.
\end{array}
\end{equation}
Therefore, it follows from \eqref{eq:2.9}, \eqref{eq:2.12},\eqref{eq:2.13}
and  \eqref{eq:2.16} that
$$
\begin{array}{ll}
J(u_1(\cdot), \bar u_2(\cdot))-J(\bar{u}_1(\cdot), \bar u_2 (\cdot))&=I_1+I_2=(J_3-J_2)+I_2\\
&\geq (J_1-J_2)+(-J_1+J_2)=0.
\end{array}
$$
Since $u_1(\cdot)\in{\cal A}_1$ is arbitrary, we conclude that
\begin{eqnarray}\label{eq:2.17}
  \begin{split}
    J_1(\bar{u}_1(\cdot),
\bar{u}_2(\cdot))=\displaystyle\min_{u_1(\cdot)\in {\cal
A}_1}J_1(u_1(\cdot), \bar{u}_2(\cdot)).
  \end{split}
\end{eqnarray}

(ii) Now we consider another stochastic optimal
control problem.
 The system  is the following
controlled
forward-backward stochastic differential equation
\begin{equation}\label{eq:2.18}
\displaystyle\left\{\begin{array}{lll}
dx(t)&=&b(t,x(t),y(t),z(t),\bar u_1(t),{u}_2(t))dt\\
&&+\sigma(t,x(t),y(t),z(t),\bar u_1(t),{u}_2(t))dB(t)\\\displaystyle
dy(t)&=&-f(t,x(t),y(t),z(t),\bar u_1(t),{u}_2(t))dt\\
&&+z(t)dB(t)\\\displaystyle x(0)&=&a\\\displaystyle y(T)&=&\xi,
\end{array} \right.
\end {equation}

where $u_2(\cdot)$ is  any given admissible control in $\cal A_2.$
The cost function is defined as
\begin{equation}\label{eq:3.19}
\begin{array}{ll}
&J_2(\bar u_1(\cdot),{u}_2(\cdot))\\
=&E\bigg[\displaystyle\int_0^Tl_2(t,x(t),y(t),z(t),\bar u_1(t),
{u}_2(t))dt\\
&+\phi_2(x(T))+h_2(y_0)\bigg], \end{array}
\end{equation}
where $(x(\cdot), y(\cdot),z(\cdot))$ is
the solution  to the system \eqref{eq:2.18} corresponding
to the control $u_2(\cdot)\in \cal A_2.$

 The  optimal control problem is minimize
$J(\bar u_1(\cdot), {u}_2(\cdot))$ over $u_2(\cdot) \in {\cal A }_2$.
As in (i), we can  similarly  show  the admissible control $\bar{u}_2(\cdot)$ is  an optimal
control of the problem, i.e,
 \begin{equation}\label{eq:2.20}
  J_1(\bar{u}_1(\cdot),
\bar{u}_2(\cdot))=\displaystyle\min_{u_2(\cdot)\in {\cal
A}_2}J_1(\bar u_1(\cdot), {u}_2(\cdot)).
\end{equation}
So from \eqref{eq:2.17} and \eqref{eq:2.20}, we can conclude
that
  $(\bar u_1(\cdot), \bar u_2(\cdot))$ is
   an
equilibrium point of Problem \ref{pro:2.1}.
The proof is complete.

\end{proof}

\section{Necessary optimality conditions}

\begin{thm}
  Under Assumptions \ref{ass:2.1}-\ref{ass:2.2}, let $(\bar{u}_1(\cdot), \bar{u}_2(\cdot))$
  be a Nash equilibrium point of  Problem \ref{pro:2.1}.
Suppose that  $(\bar{x}(\cdot), \bar{y}(\cdot), \bar{z}(\cdot))$ is
 the state process of the system \eqref{eq:1.1} corresponding to
 the admissible control $(\bar{u}_1(\cdot), \bar{u}_2(\cdot)).$
 Let $({\bar p^i}(\cdot), \bar q^i(\cdot), \bar k^i(\cdot) )$$(i=1,2)$ be  the unique
 solution of the  adjoint equation \eqref{eq:2.1} corresponding $
 (\bar{u}_1(\cdot), \bar{u}_2(\cdot);  \bar{x}(\cdot), \bar{y}(\cdot), \bar{z}(\cdot))$.
 Then we have

   \begin{equation}\label{eq:3.1}
\begin{array}{ll}
\big\langle H_{1u_1}(t,\bar{x}(t),\bar{y}(t),\bar{z}(t),\bar{u}_1(t),
\bar u_2(t), \bar{p}^1(t),
\bar{q}^1(t), \bar{k}^1(t)), u_1-\bar{u}_1(t)\big\rangle \geq 0, \forall
 u_1 \in U_1 a.s.a.e.,
\end{array}
\end{equation}
 \begin{equation}\label{eq:3.2}
\begin{array}{ll}
\big\langle H_{1u_2}(t,\bar{x}(t),\bar{y}(t),\bar{z}(t),\bar{u}_1(t),
\bar u_2(t), \bar{p}^1(t),
\bar{q}^2(t), \bar{k}^2(t)), u_2-\bar{u}_2(t)\big\rangle \geq 0, \forall u_2
\in U_2,  a.s.a.e..
\end{array}
\end{equation}
\end{thm}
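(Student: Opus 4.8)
The plan is to reduce the game to two ordinary stochastic optimal control problems and treat each by a convex (spike‑free) variation together with the duality relation built into the adjoint equation \eqref{eq:2.1}. Since $(\bar u_1(\cdot),\bar u_2(\cdot))$ is a Nash equilibrium, freezing the second player at $\bar u_2(\cdot)$ makes $\bar u_1(\cdot)$ a minimizer of $u_1(\cdot)\mapsto J_1(u_1(\cdot),\bar u_2(\cdot))$ over ${\cal A}_1$ subject to \eqref{eq:1.1} with the second control set equal to $\bar u_2(\cdot)$; symmetrically $\bar u_2(\cdot)$ minimizes $u_2(\cdot)\mapsto J_2(\bar u_1(\cdot),u_2(\cdot))$. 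Hence it suffices to establish \eqref{eq:3.1}, and \eqref{eq:3.2} then follows verbatim after interchanging the two players and the indices $1,2$. (Equivalently, one may just invoke the stochastic maximum principle of Wu \cite{Wu9} for each of these two fixed‑opponent problems, which is the route indicated in the introduction; the argument below is that proof specialized to the present setting.)

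To prove \eqref{eq:3.1}, fix $u_1(\cdot)\in{\cal A}_1$; by convexity of $U_1$ the control $u_1^\rho(\cdot)=\bar u_1(\cdot)+\rho\bigl(u_1(\cdot)-\bar u_1(\cdot)\bigr)$ is admissible for $\rho\in(0,1]$. Let $(x^\rho,y^\rho,z^\rho)$ be the corresponding state and introduce the variational FBSDE, i.e. the linear fully coupled forward–backward system whose coefficients are the derivatives of $b,\sigma,f$ along $(\bar x,\bar y,\bar z,\bar u_1,\bar u_2)$, with forcing terms $b_{u_1}(u_1-\bar u_1)$, $\sigma_{u_1}(u_1-\bar u_1)$, $f_{u_1}(u_1-\bar u_1)$, zero forward initial datum and zero backward terminal datum. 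Under Assumption \ref{ass:2.1} (the monotonicity conditions (H2.1)–(H2.2) of \cite{Wu9}) this system is well posed with solution $(x_1,y_1,z_1)$, and the stability estimates for fully coupled FBSDEs give $\rho^{-1}(x^\rho-\bar x)\to x_1$, $\rho^{-1}(y^\rho-\bar y)\to y_1$ in ${\cal S}^2$ and $\rho^{-1}(z^\rho-\bar z)\to z_1$ in $M^2$ as $\rho\to0$. Dividing $J_1(u_1^\rho(\cdot),\bar u_2(\cdot))-J_1(\bar u_1(\cdot),\bar u_2(\cdot))\ge0$ by $\rho$ and passing to the limit via Assumption \ref{ass:2.2} and dominated convergence yields
\[
E\!\int_0^T\!\!\bigl[\langle l_{1x},x_1\rangle+\langle l_{1y},y_1\rangle+\langle l_{1z},z_1\rangle+\langle l_{1u_1},u_1-\bar u_1\rangle\bigr]dt+E\langle\phi_{1x}(\bar x(T)),x_1(T)\rangle+E\langle h_{1y}(\bar y(0)),y_1(0)\rangle\ge0,
\]
all derivatives of $l_1$ being evaluated along the optimal datum. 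Applying It\^o's formula to $\langle\bar p^1(t),x_1(t)\rangle+\langle\bar k^1(t),y_1(t)\rangle$ and using the adjoint equation \eqref{eq:2.1}, the definition \eqref{eq:2.2} of $H_1$ and the variational equation, every term containing $x_1,y_1,z_1$ cancels against the boundary terms $E\langle\bar p^1(T),x_1(T)\rangle$, $E\langle\bar k^1(0),y_1(0)\rangle$ and the $l_1$‑gradient terms, leaving
\[
E\!\int_0^T\!\bigl\langle H_{1u_1}(t,\bar x(t),\bar y(t),\bar z(t),\bar u_1(t),\bar u_2(t),\bar p^1(t),\bar q^1(t),\bar k^1(t)),\,u_1(t)-\bar u_1(t)\bigr\rangle\,dt\ge0 .
\]
A standard localization argument (taking $u_1(\cdot)=\bar u_1(\cdot)+\mathbf{1}_{\Gamma}\mathbf{1}_{[t,t+\varepsilon]}(v-\bar u_1(\cdot))$ for arbitrary $v\in U_1$ and $\Gamma\in{\cal F}_t$, then letting $\varepsilon\to0$) upgrades this to the pointwise inequality \eqref{eq:3.1}.

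The main obstacle is the duality computation in the fully coupled regime: because $x_1$ appears in the backward variational equation and $(y_1,z_1)$ appear in the forward one, It\^o's formula on $\langle\bar p^1,x_1\rangle+\langle\bar k^1,y_1\rangle$ generates numerous cross terms, and one must verify that \eqref{eq:2.1} is precisely arranged — through the transposed coefficients $b_y^*,\sigma_z^*,f_x$, etc., and the boundary data $k^i(0)=-h_{iy}(y_0)$, $p^i(T)=\phi_{ix}(x(T))$ — so that exactly these terms cancel. The second technical point, the well‑posedness and ${\cal S}^2\times M^2$‑stability of the linear variational FBSDE needed for the limit $\rho\to0$, is supplied by the monotonicity assumptions (H2.1)–(H2.2) of \cite{Wu9} imposed in Assumption \ref{ass:2.1}; the remaining estimates are routine under Assumption \ref{ass:2.2}.
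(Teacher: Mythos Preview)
Your proposal is correct and follows the same strategy as the paper: reduce the Nash equilibrium to two single-player optimal control problems for fully coupled FBSDEs by freezing the opponent's control, and then invoke the stochastic maximum principle of Wu \cite{Wu9} (Theorem 3.3 there) for each. The only difference is cosmetic---the paper simply cites Wu's result, whereas you reproduce its proof (convex variation, variational FBSDE, duality via It\^o's formula on $\langle\bar p^1,x_1\rangle+\langle\bar k^1,y_1\rangle$, and localization), which you yourself note is equivalent.
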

\begin{proof}
  Since  $(\bar{u}_1(\cdot), \bar{u}_2(\cdot))$
  be an equilibrium point,
   then \begin{equation}\label{eq:3.3}
    J_1(\bar{u}_1(\cdot),
\bar{u}_2(\cdot))=\displaystyle\min_{u_1(\cdot)\in {\cal
A}_1}J_1(u_1(\cdot), \bar{u}_2(\cdot)).
\end{equation}
and

\begin{equation}\label{eq:3.4}
 J_2(\bar{u}_1(\cdot),
\bar{u}_2(\cdot))=\displaystyle\min_{u_2(\cdot)\in {\cal
A}_2}J_2(\bar u_1(\cdot), {u}_2(\cdot)).
\end{equation}
By \eqref{eq:3.3},  $\bar u_1(\cdot)$  can be regarded as
an optimal control  of the
optimal control problem where
the controlled system is \eqref{eq:2.5} and
the cost functional is
\eqref{eq:3.7}. For this case, it is easy to
see that
the Hamilton function is  $H_1$ (see \eqref{eq:2.2})
and the correspond adjoint equation is $\eqref{eq:2.1}$ for
$i=1,$  and $(\bar x(\cdot), \bar y(\cdot), \bar z(\cdot))$
is the corresponding  optimal state process.  Thus applying the
stochastic maximum principle for the optimal
control of the  forward-backward
stochastic system (see Theorem 3.3 in \cite{Wu9}), we can obtain \eqref{eq:3.1}.
Similarly, from \eqref{eq:3.4}, we can obtain \eqref{eq:3.2}.
The proof is complete.

\section{Conclution}
In this paper, we have discussed two-person non-zero sum differential game
governed by a fully coupled forward-backward stochastic system with the
control process $u(\cdot)$ appearing in the forward diffusion term.
The verification theory is obtained as a sufficient condition for the existence
of open-loop Nash equilibrium point. On the other hand, applying the
stochastic maximum principle for the optimal control
problem of the forward-backward stochastic system,
we derive the the stochastic maximum principle
in a local formulation as a necessary condition for the
existence of open-loop Nash equilibrium point.

\end{proof}

\end{document}